\def\PSL{{\rm PSL}}
\def\SL{{\rm SL}}
\newcommand{\tr}{\mathop{\rm tr}\nolimits}
\newcommand{\dete}{\mathop{\rm det}\nolimits}
\newcommand{\diago}{\mathop{\rm diag}\nolimits}
\newtheorem{theorem}{Theorem}[section]
\newtheorem{lemma}[theorem]{Lemma}
\newtheorem{proposition}[theorem]{Proposition}
\newtheorem{corollary}[theorem]{Corollary}
\newtheorem{conjecture}[theorem]{Conjecture}
\begin{document}

\title{Coprime commutators in $\PSL(2,q)$} 

\author{Marco Antonio Pellegrini}
\address{Departamento de Matem\'atica\\ 
Universidade de Bras\'ilia\\
  Bras\'ilia - DF, Brazil }
\email{pellegrini@unb.br}

\author{Pavel Shumyatsky}
\address{Departamento de Matem\'atica\\ 
Universidade de Bras\'ilia\\
  Bras\'ilia - DF, Brazil }
\email{pavel@mat.unb.br}

\thanks{The second author was supported by CNPq-Brazil}

\subjclass{20D06, 20F12}
\keywords{Coprime commutator;  involutions}

\begin{abstract}
\noindent We show that every element of $\PSL(2,q)$ is a commutator of elements of coprime orders. This is proved by showing first that in $\PSL(2,q)$ any two involutions are conjugate by an element of odd order. 
\end{abstract}

\maketitle

\section{Introduction}

An element $g$ of a group $G$ is called commutator if there exist $x,y\in G$ such that $g=[x,y]$. Here, as usual, $[x,y]=x^{-1}y^{-1}xy$. 
In 1951 Ore conjectured that every element of a nonabelian finite simple group is a commutator. 
Almost sixty years later, as a result of major efforts by many group-theorists, Ore's conjecture has been confirmed by Liebeck, O'Brien, Shalev and Tiep \cite{lost}. 

An element of a group is called a \emph{coprime} commutator if it can be written as a commutator of elements of coprime orders. In \cite{eu} the second author of the present paper conjectured that every element of a nonabelian finite simple group is a coprime commutator.  He showed that this is true for the alternating groups.
Computational work with \textsc{Magma} \cite{magma} seems to confirm this conjecture. 
Namely, we verified that every element of a nonabelian simple group of order less than $10^7$
is a coprime commutator. Here we will prove that this is also true for all simple groups $\PSL(2,q)$.

\begin{theorem}\label{commu} 
Let $q>3$ be a prime-power. Every element of $\PSL(2,q)$ is a coprime commutator.
\end{theorem}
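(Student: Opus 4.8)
\emph{Plan.} I would argue by cases on the type of $g\in\PSL(2,q)$, using throughout the fact stated above that in $\PSL(2,q)$ any two involutions are conjugate by an element of odd order; let $p$ denote the characteristic. The identity is $[1,1]$, so it is a coprime commutator. Every other element of $\PSL(2,q)$ is either \emph{semisimple}, in which case it lies in (a conjugate of) a cyclic maximal torus $T$ whose normaliser $N=N_{\PSL(2,q)}(T)$ is dihedral with $N/T$ of order $2$ acting on $T$ by inversion, or \emph{unipotent}, of order $p$. I treat these two cases separately.

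\emph{Semisimple elements.} Suppose $g\neq 1$ lies in a maximal torus $T$. Since $N$ is dihedral it contains an involution $w\notin T$ inverting $T$; in particular $wgw^{-1}=g^{-1}$. Put $t_1=gw$ and $t_2=w$. In a dihedral group every element outside the cyclic part is an involution, so $t_1$ and $t_2$ are involutions (and $t_1\neq 1$ because $g\neq 1$), while $t_1t_2=gw^2=g$. By the involution lemma there is an element $h$ of odd order with $h^{-1}t_1h=t_2$. Since $t_1^{-1}=t_1$,
\[
[t_1,h]\;=\;t_1\,h^{-1}t_1h\;=\;t_1t_2\;=\;g ,
\]
and $\gcd(\lvert t_1\rvert,\lvert h\rvert)=\gcd(2,\lvert h\rvert)=1$, so $g$ is a coprime commutator.

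\emph{Unipotent elements.} Here I would compute directly in $\SL(2,q)$ and project. As $q>3$ there is $\lambda\in\mathbb{F}_q^{*}$ with $\lambda\neq\pm 1$; for such a $\lambda$ and any $b\in\mathbb{F}_q^{*}$ set $d=\diago(\lambda,\lambda^{-1})$ and $v=\begin{pmatrix}1&b\\0&1\end{pmatrix}$. A routine calculation gives
\[
[d,v]\;=\;\begin{pmatrix}1&b(1-\lambda^{-2})\\0&1\end{pmatrix}.
\]
Given any $c\in\mathbb{F}_q^{*}$, the choice $b=c/(1-\lambda^{-2})$ makes $[d,v]=\begin{pmatrix}1&c\\0&1\end{pmatrix}$; letting $c$ run over representatives of the (one or two) conjugacy classes of nontrivial unipotent elements, this realises a representative of each class as such a commutator. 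Passing to $\PSL(2,q)$, the image of $d$ is nontrivial of order dividing $q-1$, hence coprime to $p$, while the image of $v$ has order $p$. Since being a coprime commutator is preserved under conjugacy, every unipotent element of $\PSL(2,q)$ is a coprime commutator, and the proof is complete.

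\emph{Where the difficulty lies.} With the involution lemma available the argument above is short; the only point needing separate attention is the unipotent case when $q\equiv 3\pmod 4$, for then the unipotent elements are not even conjugate to their inverses and so cannot be written as a product of two involutions, which is why the ``$g=t_1t_2$'' device fails and the explicit commutator $[d,v]$ is required. Thus essentially all the work is concentrated in the proof of the involution lemma itself; the Theorem is then an easy consequence.
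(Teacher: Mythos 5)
Your proof is correct, and at its core it runs on the same engine as the paper's: a product of two involutions $t_1t_2$ becomes the coprime commutator $[t_1,h]$ once Corollary \ref{invo} supplies an odd-order $h$ conjugating $t_1$ to $t_2$, and the unipotent classes are produced as commutators inside a Borel subgroup. The differences are in how the cases are packaged. The paper splits elements into strongly real versus non-real, citing Tiep--Zaleski and Gill--Singh for the fact that the only non-(strongly real) elements of $\PSL(2,q)$ are the unipotents when $q\equiv 3\pmod 4$; you split into semisimple versus unipotent and re-derive strong reality of semisimple elements from scratch via the dihedral normaliser of a maximal torus ($g=(gw)w$ with $w$ an inverting involution outside $T$), which makes that half self-contained at the cost of one fact you should state explicitly: the extension $T\lhd N$ really is split (dihedral) in $\PSL(2,q)$ --- in $\SL(2,q)$ the Weyl element squares to $-I$, so this only holds after factoring out the centre. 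For the unipotents, your explicit computation $[d,v]=\bigl(\begin{smallmatrix}1 & b(1-\lambda^{-2})\\ 0 & 1\end{smallmatrix}\bigr)$ is exactly the paper's observation that the cyclic complement of the Borel subgroup acts fixed-point-freely on the Sylow $p$-subgroup, written out in matrices and applied uniformly for all $q$ rather than only for $q\equiv 3\pmod 4$. Both versions place all the genuine difficulty in Corollary \ref{invo}, as you note.
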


Our proof of the above theorem is based on analysis of cosets of a certain subgroup in $\SL(2,q)$, where $q\equiv 1 \pmod 4$. 
We will show that every coset of that subgroup contains an element of odd order. From this we deduce the following theorem.

\begin{theorem}\label{cose} 
Each coset of the centralizer of an involution in $\PSL(2, q)$ contains an element of odd order.
\end{theorem}

The above theorem is somewhat related to the following question asked by Paige in the beginning of the sixties:

\medskip
\noindent {\it Is it true that if $T$ is a Sylow $2$-subgroup of the finite group $G$, then each coset of $T$ in $G$ contains at least one element of odd order?}
\medskip

Thompson gave a negative answer to Paige's question in \cite{thompson}. He showed that the group $\PSL(2,53)$ provides a counter-example. 
Recently Goldstein and Guralnick proved that for any prime $p$ there exist infinitely many finite simple groups $G$ with a coset of a Sylow $p$-subgroup $T$ of $G$ in which every element has order divisible by $p$ \cite{goldgura}. 

In view of our Theorem \ref{cose} the following related conjecture seems plausible.

\begin{conjecture}\label{conj} 
Let $T$ be a Sylow $2$-subgroup of a finite group $G$ and $t$ an involution in $Z(T)$.
Then each coset of $C_G(t)$ contains an element of odd order.
\end{conjecture}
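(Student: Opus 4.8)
The plan is to reduce Conjecture~\ref{conj} to a statement about conjugacy of involutions, and then to attack that statement by the philosophy behind Theorem~\ref{cose}. First I would record the exact translation between cosets and conjugates. Since $t\in Z(T)$ we have $T\le C_G(t)$, so $T$ is also a Sylow $2$-subgroup of $C_G(t)$, and the map $gC_G(t)\mapsto t^{g^{-1}}$ identifies the coset space $G/C_G(t)$ with the conjugacy class $t^G$ as $G$-sets. Under this identification,
\[
gC_G(t)\cap\{\text{elements of odd order}\}\neq\varnothing
\iff
t^{g^{-1}}=t^{x}\ \text{for some $x$ of odd order.}
\]
Thus Conjecture~\ref{conj} is equivalent to the assertion that every $G$-conjugate of $t$ is conjugate to $t$ by an element of odd order; I will call this relation \emph{odd conjugacy}. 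This is precisely the form in which the abstract phrases the $\PSL(2,q)$ result.

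The one unconditional tool is a dihedral observation: if $s\in t^G$ and the product $ts$ has odd order $n$, then $\langle t,s\rangle$ is dihedral of order $2n$, its reflections are fused by powers of the rotation $ts$, and since $ts$ has odd order this produces an odd-order element conjugating $t$ to $s$. Hence the difficulty lives entirely in pairs $t,s$ with $|ts|$ even. To set up an induction I would first quotient out $O_{2'}(G)$: because $|t|=2$ is coprime to $|O_{2'}(G)|$, the class $t^G$, the hypothesis $t\in Z(T)$, the centralizer, and the property of being an odd-order element all pass cleanly between $G$ and $G/O_{2'}(G)$, so one may assume $O_{2'}(G)=1$. Then Glauberman's $Z^*$-theorem applies: either $t\in Z(G)$, in which case $C_G(t)=G$ and the single coset trivially contains $1$, or $t$ is $G$-conjugate to a \emph{distinct} involution commuting with it. From here the task is intrinsically a fusion problem, and I would try to push it into the components of $F^*(G)$, reducing to the (quasi)simple case and then verifying odd conjugacy of involutions group-by-group via the classification, with the computation behind Theorem~\ref{cose} serving as the prototype for the groups $\PSL(2,q)$.

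The main obstacle is that odd conjugacy is not visibly an equivalence relation: if $t^x=s$ and $s^y=u$ with $x,y$ of odd order, then $t^{xy}=u$, but a product of odd-order elements need not have odd order, so a chain of dihedral steps does not assemble into a single odd-order conjugator. Equivalently, the count of odd-order elements per coset is constant only along $C_G(t)$-orbits of $t^G$ (equivalently along $C_G(t)$--$C_G(t)$ double cosets), so one must reach every such orbit individually rather than argue by connectivity. Compounding this, the available structure theory runs \emph{against} the elementary tool: the $Z^*$-theorem and the usual $2$-local analysis produce commuting conjugate pairs, whose product has order $2$, which is exactly the even-product case the dihedral lemma cannot linearize. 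Overcoming this tension---converting genuine fusion control of the class $t^G$ into single odd-order conjugators---is in my view the crux, and is presumably why the statement is offered only as a conjecture; I expect that a complete proof would require the classification of finite simple groups, with the $\PSL(2,q)$ analysis of this paper as the model for the simple constituents.
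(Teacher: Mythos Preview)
There is no proof in the paper to compare against: Conjecture~\ref{conj} is stated as an open conjecture, supported only by the remark that it holds for soluble groups and by Theorem~\ref{cose} for $\PSL(2,q)$. Your proposal is likewise not a proof, and you acknowledge this in your final paragraph.

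Your reformulation is nonetheless exactly the one the paper adopts in the case it does settle. The equivalence between ``every coset of $C_G(t)$ meets the odd-order elements'' and ``every $G$-conjugate of $t$ is reached from $t$ by an odd-order conjugator'' is precisely how Corollary~\ref{invo} is extracted from Theorem~\ref{cose}. The dihedral observation and the $O_{2'}(G)$ reduction are both correct; for the latter, note that an element of $G$ has odd order iff its image modulo $O_{2'}(G)$ does, and two involutions with equal image have product lying in $O_{2'}(G)$, hence of odd order, so your dihedral lemma completes the lift. Your diagnosis of the essential obstruction---that odd conjugacy is not visibly transitive, and that the $Z^*$-theorem hands you commuting conjugate pairs rather than odd-product pairs---is accurate and is exactly why the paper leaves the general statement as a conjecture rather than a theorem.
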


It is not difficult to see that in the case of soluble groups the conjecture is true. Our Theorem \ref{cose} shows that the conjecture is also true when $G=\PSL(2, q)$. Note that if we allow $t$ to be non-central in $T$, then there are counter-examples.
For instance take $t=(1,2)$ in $G=Sym(4)$. 
In this case, the coset $C_G(t)(1,3,2,4)$ consists only of elements of even order.

However it seems that for finite simple groups Conjecture \ref{conj} can be generalized in the following way.

\begin{conjecture}\label{conj2} 
Each coset of the centralizer of an involution in a finite simple group $G$ contains an element of odd order unless 
 $G=\PSL(n,2)$ with $n\geq 4$.
\end{conjecture}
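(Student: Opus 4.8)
\section*{Proof strategy for Conjecture \ref{conj2}}

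The plan is to reduce the statement to a question about conjugacy of involutions and then to run through the classification of finite simple groups. First I would record the elementary dictionary: for an involution $t\in G$ the map $C_G(t)g\mapsto t^g$ is a bijection from the coset space $C_G(t)\backslash G$ onto the conjugacy class $t^G$, and a coset $C_G(t)g$ contains an element of odd order precisely when there is an element of odd order conjugating $t$ to the involution $s:=t^g$. Thus the conjecture is equivalent to: in every finite simple group $G$ other than $\PSL(n,2)$ with $n\geq 4$, any two conjugate involutions are conjugate by an element of odd order. The case that makes this plausible is the dihedral one: if $|st|$ is odd then a suitable power of $st$ has odd order and conjugates $t$ to $s$, so the whole difficulty is concentrated on pairs $(s,t)$ with $|st|$ even, where the conjugating element must be produced from outside the dihedral group $\langle s,t\rangle$.

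Next I would split into the three families given by the classification. For $G=A_n$ an involution is a product of $k$ disjoint transpositions; two such are conjugate iff they have the same $k$, and one moves one to the other by an explicit permutation of odd order (adjusting by a $3$-cycle if the obvious choice has the wrong sign), so this family is handled by a direct combinatorial construction, in the spirit of the second author's treatment of the alternating groups mentioned in the introduction. For the sporadic groups and the Tits group I would appeal to the \textsc{Atlas} together with a \textsc{Magma} computation of $1_{C_G(t)}^G$ evaluated on the classes of odd-order elements, the same kind of check already reported for groups of order $<10^7$. The substantial work is with the groups of Lie type, which I would treat according to the defining characteristic. In odd characteristic an involution is semisimple, its centralizer is the fixed-point group of a connected reductive subgroup, and fusion of such involutions is governed by the action of the Weyl group on a maximal torus; here I would try to exhibit, inside a torus or a Levi subgroup, an odd-order element realizing the required conjugation, reducing to small-rank configurations with $\PSL(2,q)$ (our Theorem \ref{cose}) as the base case. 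In characteristic $2$ an involution is unipotent and its centralizer is a $2$-local subgroup, and this is exactly where the genuine obstruction for $\PSL(n,2)$, $n\geq 4$, lives; the delicate point is to show that for all other characteristic-$2$ groups (the symplectic, orthogonal and unitary groups, $\PSL(n,q)$ with $q$ even $>2$, and the exceptional families) the desired odd-order conjugator always exists.

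I expect the main obstacle to be precisely this uniform treatment of the classical groups in characteristic $2$: there are several $G$-classes of unipotent involutions, their centralizers are complicated $2$-local subgroups, and — as the $\PSL(n,2)$ examples show — the conclusion can genuinely fail, so the argument must be sharp enough to pin down exactly the boundary between the failing case $\PSL(n,2)$ with $n\geq 4$ and everything else. A secondary difficulty is the bookkeeping for small fields and small ranks, and for the groups ${}^2B_2$, ${}^2G_2$, ${}^2F_4$, $G_2$, $F_4$, $E_7$ and the like, where ad hoc arguments or explicit computation may be unavoidable. If these can be organized cleanly, the reduction of the first paragraph turns the whole problem into a finite, if laborious, verification.
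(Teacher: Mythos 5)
The statement you are addressing is a \emph{conjecture}: the paper does not prove it, and explicitly says so. What the paper actually establishes is (i) the case $G=\PSL(2,q)$ (Theorem \ref{cose}, via Proposition \ref{propH}), (ii) that the groups $\PSL(n,2)$ with $n\geq 4$ genuinely are exceptions (Proposition \ref{PSLn}, exhibiting an explicit involution $\tilde t_1$ and a coset $C_n\tilde t_2$ with no odd-order elements), and (iii) a \textsc{Magma} verification for all simple groups of order less than $10^{10}$. So there is no proof in the paper to compare yours against, and your text should not be presented as a proof either.

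That said, your opening reduction is correct and worth recording: the elements conjugating $t$ to $s=t^g$ form exactly the coset $C_G(t)g$, so the conjecture is equivalent to the assertion that any two conjugate involutions of $G$ (outside the excluded family) are conjugate by an element of odd order; and the dihedral observation disposes of all pairs with $|st|$ odd. This is precisely the mechanism the paper uses in the other direction to derive Corollary \ref{invo} from Theorem \ref{cose}. The genuine gap is everything after that: the alternating case is plausible but not written out (one must actually produce the odd-order conjugator of the right sign), and the groups of Lie type --- in particular the characteristic-$2$ classical and exceptional groups, where the $\PSL(n,2)$ counterexamples show the statement is delicate --- are only described as a programme, with the key step (``exhibit, inside a torus or a Levi subgroup, an odd-order element realizing the required conjugation'') asserted rather than carried out. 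Until that analysis is done, nothing beyond the paper's own partial results has been proved; your contribution is a reasonable plan of attack, not a proof.
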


We verified with \textsc{Magma} that the last conjecture holds for all simple groups of order less than $10^{10}$. Furthermore, we will show that the groups $\PSL(n,2)$, with $n\geq 4$, always are an exception to Conjecture \ref{conj2}.

\section{Cosets in $\PSL(2,q)$}

In this section we prove Theorem \ref{cose}. Recall that in $\PSL(2,q)$ all involutions are conjugate (see, for instance, \cite[\S 38]{dorn}). Thus, it suffices to prove the claim for a single involution.

First however some preparatory work is required. We start with two elementary lemmas.

\begin{lemma}\label{15}
Let $a,b,x,y$ be non-zero elements of a field and suppose that $xa+ya^{-1}=xb+yb^{-1}$. Then, either $a=b$ or $xa=yb^{-1}$.
\end{lemma}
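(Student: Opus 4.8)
The plan is to rearrange the hypothesis into a single factored equation and then invoke the fact that a field has no zero divisors. Starting from $xa+ya^{-1}=xb+yb^{-1}$, I would collect the $x$-terms on one side and the $y$-terms on the other, obtaining $x(a-b)=y(b^{-1}-a^{-1})$. The key observation is that $b^{-1}-a^{-1}=\frac{a-b}{ab}$, so the right-hand side also carries a factor of $a-b$; multiplying through by $ab$ (which is legitimate since $a,b\neq 0$) turns the identity into $xab(a-b)=y(a-b)$, that is, $(a-b)(xab-y)=0$.

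Since a field is an integral domain, one of the two factors must vanish. If $a-b=0$ we are in the first case, $a=b$. Otherwise $xab-y=0$, which rearranges to $xab=y$ and hence, dividing by the non-zero element $b$, to $xa=yb^{-1}$. This is precisely the dichotomy asserted in the lemma.

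There is essentially no obstacle here: the argument uses only the field axioms (commutativity of multiplication and the existence of inverses for the non-zero elements $a$ and $b$) together with the absence of zero divisors. The one point worth a moment's care is to confirm that no extra hypothesis — such as $a\neq -b$ or $\operatorname{char}\neq 2$ — is secretly needed; it is not, since the factorization $(a-b)(xab-y)=0$ holds in every field.
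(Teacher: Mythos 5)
Your proof is correct and follows essentially the same route as the paper's: both rearrange the hypothesis so that a common factor ($a-b$ in your version, $ab^{-1}-1$ in the paper's) can be cancelled or, equivalently, so that the absence of zero divisors forces one of the two alternatives. Your choice of multiplying by $ab$ rather than $xy^{-1}ab^{-1}$ makes the computation slightly cleaner, but the argument is the same.
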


\begin{proof}
Multiplying both sides of the equation  $xa+ya^{-1}=xb+yb^{-1}$ by $xy^{-1}ab^{-1}$,
we have
$$x^2y^{-1}a^2b^{-1}+xb^{-1}=x^2y^{-1}a+xab^{-2}.$$
Thus, we deduce
$$x^2y^{-1}a^2b^{-1}-x^2y^{-1}a =xab^{-2}-xb^{-1}.$$
Therefore, 
  $$x^2y^{-1}a(ab^{-1}-1)=xb^{-1}(ab^{-1}-1)$$
and so the lemma follows.
\end{proof}

\begin{lemma}\label{16}
Let $r$ be a prime-power number and $F$ a finite field with $r$ elements.
For any  non-zero element $u$ of  $F$, set
$$S_u=\{ a+b \mid a,b\in F, ab=u\}.$$
Then
\begin{enumerate}
\item $|S_u|=\frac{r-2}{2}+1$, if $r$ is even;
\item $|S_u|=\frac{r-3}{2}+2$, if $r$ is odd and $u$ is a square in $F$;
\item $|S_u|=\frac{r-1}{2}$, if $r$ is odd and $u$ is not a square in $F$.
\end{enumerate}
\end{lemma}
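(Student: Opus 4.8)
The plan is to realize $S_u$ as the image of a single explicit map and then count its fibres. First I would observe that, since $u\neq 0$, every factorization $ab=u$ has $a\neq 0$ and $b=ua^{-1}$; hence
$$S_u=\{a+ua^{-1}\mid a\in F\setminus\{0\}\}=\phi\bigl(F\setminus\{0\}\bigr),$$
where $\phi\colon F\setminus\{0\}\to F$ is defined by $\phi(a)=a+ua^{-1}$. Thus $|S_u|$ equals the number of fibres of $\phi$.

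Next I would determine these fibres. Applying Lemma \ref{15} with $x=1$ and $y=u$, the equality $\phi(a)=\phi(b)$ forces $a=b$ or $a=ub^{-1}$; conversely, a direct substitution shows that both $a=b$ and $ab=u$ yield $\phi(a)=\phi(b)$. Hence the fibre of $\phi$ through $a$ is exactly $\{a,\,ua^{-1}\}$, which is a singleton precisely when $a^2=u$ and has two elements otherwise.

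It then remains to count the number $k$ of $a\in F\setminus\{0\}$ with $a^2=u$ (equivalently, the number of square roots of $u$ in $F$, since $u\neq 0$). When $r$ is even the squaring map is a bijection, so $k=1$; when $r$ is odd, $k=2$ if $u$ is a square and $k=0$ if $u$ is a nonsquare. Since $|F\setminus\{0\}|=r-1$, exactly $k$ fibres are singletons and the remaining $r-1-k$ points are partitioned into two-element fibres, so
$$|S_u|=k+\frac{r-1-k}{2}=\frac{r-1+k}{2}.$$
Substituting $k=1$, $k=2$, $k=0$ gives $\tfrac{r-2}{2}+1$, $\tfrac{r-3}{2}+2$ and $\tfrac{r-1}{2}$ respectively, which are the three claimed values.

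There is essentially no serious obstacle here beyond bookkeeping: the only point requiring care is the identification of the fibre of $\phi$ as precisely $\{a,ua^{-1}\}$ (the nontrivial inclusion being exactly Lemma \ref{15}), together with correctly tracking how many square roots $u$ admits in the three regimes for $r$. One could alternatively phrase the argument in odd characteristic via the discriminant condition ``$s^2-4u$ is a square'' and a quadratic character sum, but the direct fibre count above is cleaner and handles even $r$ uniformly.
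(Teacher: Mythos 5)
Your proof is correct and follows essentially the same route as the paper: parametrize $S_u$ by $a\mapsto a+ua^{-1}$ on $F^\times$, invoke Lemma \ref{15} to see the map is two-to-one except at the square roots of $u$, and count accordingly. The fibre-counting formula $|S_u|=\frac{r-1+k}{2}$ is just a slightly tidier packaging of the paper's case analysis.
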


\begin{proof}
Suppose $s\in S_u$. Then $s$ can be written in the form $s_a=a+ua^{-1}$.
If $u$ is a square in $F$, choose $d$ such that $u=d^2$. 
If $r$ is even, then for every $u$ there is a unique $d$ such that $u=d^2$. 
If $r$ is odd, then either $u$ is non-square or there are precisely two elements, $d$ and $-d$, with the above property. 
According to Lemma \ref{15} for every possible value of $s$, different from $s_d$ and $s_{-d}$, there are precisely two elements $a_1,a_2\in F^\times$ such that $s=s_{a_1}$ and $s=s_{a_2}$.  Now we let $a$ run over $F^\times\setminus\{d,-d\}$. 

If $r$ is even, we obtain $\frac{r-2}{2}$ different values for $s_a$. Adding to this set $s_d$, we conclude that $|S_u|=\frac{r-2}{2}+1$.

If $r$ is odd and $u$ is a square, we obtain $\frac{r-3}{2}$ different values for $s_a$. 
Adding to this set $s_d$ and $s_{-d}$, we conclude that $|S_u|=\frac{r-3}{2}+2$.

Finally, if $r$ is odd and $u$ is not a square, we obtain $\frac{r-1}{2}$ different values for $s_a$. Therefore in this case $|S_u|=\frac{r-1}{2}$.
\end{proof}

We can now prove the following.

\begin{proposition}\label{propH} Let $K$ be the finite field with $q$ elements, where $q\equiv 1 \pmod 4$. Let $ \widetilde G=\SL(2,K)$ and choose a generator $\nu$ of the multiplicative group $K^\times$ of $K$. Let
$$a=\left(\begin{array}{cc}\nu & 0 \\ 0 & \nu^{-1}\end{array}\right), \qquad b=\left(\begin{array}{cc} 0 & 1 \\ -1 & 0 \end{array}\right).$$
Denote by $H$ the subgroup of $\widetilde G$ generated by $a$ and $b$. Then for every $x\in \widetilde G\setminus H$ the coset $Hx$ contains an element of odd order.
\end{proposition}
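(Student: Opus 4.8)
The plan is to reduce the statement to the traces of the matrices lying in the coset $Hx$, and then to conclude by a one-line counting argument. Write $x=\left(\begin{array}{cc}\alpha&\beta\\ \gamma&\delta\end{array}\right)$, so that $\alpha\delta-\beta\gamma=1$. Since $\nu$ generates $K^\times$, the matrices $a^ix$ (for $0\le i\le q-2$) lie in $Hx$ and the set of their traces is
$$A=\{\,t\alpha+t^{-1}\delta\mid t\in K^\times\,\}.$$
If $\alpha=0$ or $\delta=0$ then, because $x\notin H$ excludes $\alpha=\delta=0$, the remaining one of the two is non-zero and $A=K^\times$; otherwise the substitution $a=t\alpha$ identifies $A$ with the set $S_{\alpha\delta}=\{\,a+(\alpha\delta)a^{-1}\mid a\in K^\times\,\}$ of Lemma \ref{16}, with $\alpha\delta\neq0$. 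In either case one gets $|A|\ge\frac{q-1}{2}$, using Lemma \ref{16} in the second case.

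Next I would isolate the set of ``good'' traces
$$\mathcal G=\{\,\tau\in K\mid \tau^2-4\notin(K^\times)^2\,\},$$
i.e. those $\tau$ for which $\tau^2-4$ is zero or a non-square. Its complement in $K$ is $\{\,\lambda+\lambda^{-1}\mid\lambda\in K^\times\setminus\{\pm1\}\,\}$, a set of $\frac{q-3}{2}$ elements (enlarged by $\pm2$ it is the set $S_1$ of Lemma \ref{16}, of size $\frac{q+1}{2}$), so $|\mathcal G|=\frac{q+3}{2}$. Hence $|A|+|\mathcal G|\ge\frac{q-1}{2}+\frac{q+3}{2}=q+1>|K|$, and therefore $A\cap\mathcal G\neq\emptyset$; fix $h\in\langle a\rangle\subseteq H$ with $\tau:=\tr(hx)\in\mathcal G$.

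It remains to show that $hx$ or $-hx$ has odd order. Both belong to $Hx$, because $-I=a^{(q-1)/2}\in H$; and $hx\neq\pm I$, because $x\notin H$. If $\tau=2$ then $hx$ is a non-trivial unipotent matrix and has order $p$, the (odd) characteristic of $K$; if $\tau=-2$ the same is true of $-hx$. If $\tau^2-4$ is a non-square, then $hx$ is semisimple with a pair of Galois-conjugate eigenvalues $\lambda,\lambda^q\in\mathbb{F}_{q^2}\setminus K$ whose product is $\det(hx)=1$, so $\lambda^{q+1}=1$. Since $q\equiv1\pmod 4$ we may write $q+1=2w$ with $w$ odd, and the order of $hx$, which equals that of $\lambda$, divides $2w$. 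If that order is odd we are done; if it is $2d$ with $d\mid w$ odd, then $\lambda^d$ is the unique involution of $\langle\lambda\rangle$, hence $\lambda^d=-1$, so $-\lambda=\lambda^{d+1}$ has order $2d/\gcd(2d,d+1)=d$ and $-hx$ has odd order.

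The one real idea is the choice of $\mathcal G$: one must notice that, exactly because $q\equiv1\pmod 4$, a trace $\tau$ with $\tau^2-4$ a non-square forces the element into the non-split torus, whose order is twice an odd number, so that passing from $hx$ to $-hx$ (legitimate since $-I\in H$) repairs the parity — and the unipotent traces $\pm2$ behave in the same way. Granted this, the counting is tight but inevitable: $|\mathcal G|$ is just over $q/2$, which, balanced against $|A|\ge\frac{q-1}{2}$ from Lemma \ref{16}, is precisely enough to force the intersection. The points needing care, none of them a serious obstacle, are the degenerate positions of $x$ (some entry $0$), the exclusion $hx\neq\pm I$, and the bookkeeping with the central involution $-I$.
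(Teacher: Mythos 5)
Your proof is correct, and it takes a genuinely different --- and substantially shorter --- route than the paper's. The paper argues by contradiction: if $Hx$ had no element of odd order, every element of $Hx$ would have order divisible by $4$, hence be conjugate into $H$ and have trace in $S_1$; comparing $R=S_{\alpha\delta}$ with $S_1$ then forces a long case analysis whose hardest branch ($\alpha\delta$ a square and $q-1$ a $2$-power) passes through the identities $S_1=mS_1$ and $T+4=T$, invokes Mih\u{a}ilescu's theorem on Catalan's conjecture to reduce to $q=9$, and finishes that case by a \textsc{Magma} computation. You instead characterize directly the traces that \emph{guarantee} an odd-order element: if $\tau^2-4$ is zero or a non-square, a non-central $g\in\SL(2,q)$ with $\tr(g)=\tau$ is either ($\pm$ a) unipotent or lies in the non-split torus of order $q+1\equiv 2\pmod 4$, so one of $g,-g$ has odd order, and $-I=a^{(q-1)/2}\in H$ makes the sign adjustment legitimate. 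Your count is tight but airtight: $|\mathcal G|=\frac{q+3}{2}$ (the complement being $S_1\setminus\{\pm2\}$), $|\{\tr(a^ix)\}|\ge\frac{q-1}{2}$ by Lemma \ref{16} (or $=q-1$ in the degenerate case $\alpha\delta=0$, which cannot have $\alpha=\delta=0$ since then $x=a^kb\in H$), and $\frac{q-1}{2}+\frac{q+3}{2}=q+1>|K|$. All the side conditions you flag ($hx\ne\pm I$ because $\pm I\in H$ and $x\notin H$; the order of $\lambda$ dividing $q+1=2w$ with $w$ odd; $-\lambda=\lambda^{d+1}$ of odd order $d$) check out. What your approach buys is considerable: it is uniform in $q$, purely elementary, and eliminates both the deep number-theoretic input and the computer verification for $q=9$; what it gives up is only the incidental structural information the paper extracts along the way (e.g.\ the explicit order spectra of the cosets in $\SL(2,9)$).
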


\begin{proof}
Suppose that the proposition is false and the coset $Hx$ entirely consists of elements of even order. 
Then in fact every element in $Hx$ has order divisible by $4$. 
Indeed, suppose that the order of $x$ is not divisible by $4$. Write $\langle x\rangle =\langle y\rangle\times\langle z\rangle$, where $y$ has odd order and $z$ is an involution such that $x=yz$. 
Then $z= \bigl(\begin{smallmatrix} -1 & 0 \\0 & -1 \end{smallmatrix} \bigr)\in H$. Therefore $Hx$ contains the element $y$ which is of odd order. 
Hence, we assume that all elements in $Hx$ have order divisible by $4$.

We will now use the fact that every element of $\widetilde G$ whose order is divisible by $4$ is conjugate to an element of $H$ (see \cite[\S 38]{dorn}). Let $S_u$ have the same meaning as in Lemma \ref{16} and $\tr(h)$ denote the trace of a matrix $h$. Then $S_1$ is precisely the set $\{\tr(h) \mid h\in H\}$. Here we use the fact that $0\in S_1$, since $q\equiv 1 \pmod 4$. 
Further, Lemma \ref{15} shows that the order of $h\in \langle a\rangle$ is completely determined by $\tr(h)$. 
Let 
$$S^*=\{\tr(h) \mid \text{ $h=1$ or $h$ is of even order in } H\}.$$ 
Thus, we will obtain a contradiction once we show that there exists $h\in H$ such that $\tr(hx)\not\in S^*$.
Let $x=\bigl(\begin{smallmatrix} \alpha & \beta \\\gamma & \delta \end{smallmatrix} \bigr)$ and denote by $R$ the set $\{\tr(a^i x)\mid i=0,1,\dots,q-2\}$.
Then 
$$R=\{\alpha\nu^i+\delta\nu^{-i} \mid i=0,1,\dots,q-2\}=S_{\alpha\delta}.$$ 
Suppose that $\alpha\delta$ is not a square in $K$. 
Then, by Lemma \ref{16} $|S_{\alpha\delta}|=\frac{q-1}{2}$, while $|S_1|=\frac{q-3}{2}+2$. 
Hence $\tr(a^i x) \not\in S^*$ for some $i$, as required. 
Therefore we assume that $\alpha\delta =m^2$ for some $m\in K$. 

If $m=0$, then $\beta\gamma \neq 0$ and we can work with the matrix 
$bx=\bigl(\begin{smallmatrix} \gamma & \delta \\-\alpha & -\beta \end{smallmatrix} \bigr)$ 
in place of $x$. So without loss of generality we can assume that $m\neq 0$. We need to show that $R\neq S^*$. If $q-1$ is not a $2$-power, the subgroup $\langle a\rangle$ contains elements of odd order and therefore $S^*\neq S_1$. Since by Lemma \ref{16} $|S_1|=|R|$, we obtain a contradiction. 
Thus, we assume that $q-1$ is a $2$-power and $S^*=S_1=R$. We have $$S_1=R=S_{m^2}=\{m\nu^i+ m\nu^{-i}\mid i=0,\dots,q-2\}=mS_1.$$ 

Suppose that $m^2=1$. Since $\dete(x)=1$, it follows that either $\beta=0$ or $\gamma=0$.
If $\beta=\gamma=0$, then $x\in\langle a\rangle$. 
If $\beta\neq 0$, we see that the coset $\langle a\rangle x$ contains the transvection $\bigl(\begin{smallmatrix} 1 & \beta \\0 & 1 \end{smallmatrix} \bigr)$, which is of odd order. 
Thus, without loss of generality we can assume that $m^2\neq 1$. Since $q\equiv 1 \pmod 4$, the field $K$ contains an element $j$ such that $j^2=-1$.
We know that $S_1=mS_1$ and since the order of $m$ is at least $4$, 
it follows that $S_1=jS_1$. Recall that $S_1=\{ k+k^{-1}\mid k\in K^\times\}$ and it is easy to see that 
$$jS_1=\{j k+ jk^{-1}\mid k\in K^\times\}=\{k-k^{-1}\mid k\in K^\times\}.$$ 
We therefore deduce that 
$$\{k+k^{-1}\mid k\in K^\times\}= \{k-k^{-1}\mid k\in K^\times\}.$$ 
Considering now the set of squares of the above set we conclude that 
$$\{k^2+k^{-2}+2\mid k\in K^\times\}=\{k^2+k^{-2}-2 \mid k\in K^\times\}. $$ 
Let $T=\{k^2+k^{-2}\mid k\in K^\times\}$. The above equality shows that $T+4=T$. 
If $q=5$, then $T=\{0,\nu,\nu^3\}$ and the equality $T+4=T$ yields a contradiction.
Thus $q\neq 
5$ and so $j$ is a square in $K$.  Let us determine $|T|$. 

It is clear that the order of $T$ is the same as the order of $\{k^2+k^{-2}+2 \mid k\in K^\times\}$. The set $\{k^2+k^{-2}+2\mid  k\in K^\times\}$ is precisely the set of all squares of elements of $S_1$. 
Obviously $S_1=-S_1$ and so the order of the set of all squares of non-zero elements of $S_1$ is half of $(|S_1|-1)$. 
Since $T$ also contains $0$, Lemma \ref{16} shows that $|T|=\frac{q+3}{4}$. Let $p$ be the characteristic of the field $K$. The equality $T+4=T$ shows that $|T|$ must be divisible by $p$ and since $|T|=\frac{q+3}{4}$, we conclude that $p=3$. By Mih\v ailescu's theorem on Catalan's conjecture \cite{catalan}, it now follows that $q=9$.
However, a direct computation shows that in $\SL(2,9)$ every coset of $H$ contains an element of odd order. More precisely, the computation shows that the set of the orders of elements in $Hx$ is necessarily one of the following:
$$
\{ 5, 8, 10 \},\quad \{ 3, 4, 6, 8  \},\quad
\{ 3, 4, 5, 6, 10 \},\quad \{ 3, 4, 5, 6, 8, 10 \}.$$
This completes the proof.
\end{proof}

Now we are ready to prove Theorem \ref{cose}.

\begin{proof}[Proof of Theorem \ref{cose}] 
Let $C$ be the centralizer in $G=\PSL(2,q)$ of an involution.
If $q$ is even, then the elements of $G$ of even order are actually involutions (see \cite[Theorem 38.2]{dorn}).
Furthermore, it is easy to see that  $g\in G$ is an involution if and only if $\tr(g)=0$. 
Hence, we obtain that for every involution $x\not \in C$ the coset $Cx$ contains exactly one involution.

Suppose that $q\equiv 3\pmod 4$ and use the following short argument that was communicated to us by R. M. Guralnick. The order of a Borel subgroup  $B$  of $G$ is $\frac{q(q-1)}{2}$ and so it is odd. 
Since the centralizer $C$ of an involution has order $q+1$ (see, for instance, \cite[II 8.4]{Hup}), the group $G$ can be written as the product $G=CB$ and the result follows.

So, we are left with the case $q\equiv 1 \pmod 4$. Let $j=\nu^{\frac{q-1}{4}}$ and define $t$ as the image in $\PSL(2,q)$ of the matrix
 $\bigl(\begin{smallmatrix} j & 0 \\0 & -j \end{smallmatrix}\bigr) \in  \widetilde G$.
Then, the centralizer $C$  of $t$ is the image in $G$ of the above subgroup $H$. Applying Proposition \ref{propH}, we obtain the result.
\end{proof}

\medskip
From Theorem \ref{cose} we deduce the following.

\begin{corollary}\label{invo}
Any two involutions in $\PSL(2,q)$ are conjugate by an element of odd order.
\end{corollary}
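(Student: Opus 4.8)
The plan is to deduce Corollary \ref{invo} directly from Theorem \ref{cose} together with the fact that all involutions in $\PSL(2,q)$ are conjugate. First I would fix two involutions $t_1$ and $t_2$ in $G=\PSL(2,q)$. Since all involutions of $G$ are conjugate (as already recalled in the excerpt, see \cite[\S 38]{dorn}), there is some $g\in G$ with $t_1^g=t_2$. The point is to upgrade this $g$ to an element of odd order.

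The key observation is that the set of all $g\in G$ conjugating $t_1$ to $t_2$ is precisely a coset of $C_G(t_1)$: if $t_1^{g}=t_1^{g'}=t_2$, then $g'g^{-1}$ centralizes $t_2$, equivalently $g^{-1}$ times that is... more cleanly, $g g'^{-1}$ normalizes nothing spurious — one checks $t_1^{g(g')^{-1}\cdot g'} $, so the conjugating set is the right coset $C_G(t_1)\,g$ (or a left coset of $C_G(t_2)$, depending on convention). Thus I must show this coset contains an element of odd order. But that is exactly the content of Theorem \ref{cose}, applied to the involution $t_1$: every coset of $C_G(t_1)$ contains an element of odd order. Picking such an element $h$ in the coset $C_G(t_1)g$, we get $t_1^h=t_2$ with $h$ of odd order, as required.

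There is essentially no obstacle here; the only mild care needed is the bookkeeping of left versus right cosets and making sure the conjugating set really is a single coset of the centralizer, which is a one-line verification: if $t_1^{h}=t_2$ and $c\in C_G(t_1)$, then $t_1^{ch}=(t_1^{c})^{h}=t_1^{h}=t_2$, and conversely if $t_1^{h}=t_1^{h'}=t_2$ then $h'h^{-1}$ fixes $t_2$... to land inside $C_G(t_1)$ one instead writes $hh'^{-1}\in C_G(t_2)$, or equivalently $h(h')^{-1}$, so that $\{h: t_1^h=t_2\}$ is the left coset $h_0\,C_G(t_2)$ while simultaneously the right coset $C_G(t_1)\,h_0$ for any fixed solution $h_0$. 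Either description suffices, since Theorem \ref{cose} is symmetric in the choice of involution (all involutions being conjugate, their centralizers are conjugate, and the statement "every coset contains an element of odd order" transfers). Hence one applies Theorem \ref{cose} to whichever centralizer is convenient.

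In short, the proof is: realize the conjugating set as a coset of a centralizer of an involution, invoke Theorem \ref{cose} to find an odd-order element in that coset, and observe that this element does the conjugation. I would write it in three or four lines.
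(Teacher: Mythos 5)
Your proposal is correct and follows essentially the same route as the paper: both identify the set of elements conjugating $t_1$ to $t_2$ as the coset $C_G(t_1)x$ for any one solution $x$, and then invoke Theorem \ref{cose} to find an odd-order element in that coset. The paper's version is just the three-line write-up you describe, without the left/right coset digression.
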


\begin{proof}
Let $t_1,t_2$ be two distinct involutions in $G=\PSL(2,q)$. As mentioned at the beginning of this section, there exist an element $x\in G$ such that $t_2=x^{-1}t_1x$. Let $C$ be the centralizer of $t_1$ in $G$. Theorem \ref{cose} implies that the coset $Cx$ contains an element $g$ of odd order. It is clear that 
$t_2=g^{-1}t_1g$.
\end{proof}

We close this section proving the following.

\begin{proposition}\label{PSLn}
Let $G=\SL(n,2)$ with $n\geq 4$. Then $G$ has an involution such that a coset of its centralizer consists only of elements of even order.
\end{proposition}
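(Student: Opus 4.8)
The plan is to construct the desired involution and coset explicitly. Write $G=\SL(n,2)$; since every matrix over $\mathbb{F}_2$ has determinant $1$ we have $G={\rm GL}(n,2)$, acting on $V=\mathbb{F}_2^n$. I will use three elementary facts: (i)~an element of $G$ has odd order if and only if it is semisimple, equivalently has separable minimal polynomial; (ii)~if $g\in G$ is semisimple, then so is its restriction to any invariant subspace and its induced action on any invariant quotient (a submodule or quotient module of a semisimple module is semisimple); (iii)~in ${\rm GL}(2,2)\cong Sym(3)$ an element is semisimple if and only if it has odd order, that is, if and only if it lies in the unique subgroup $L$ of order $3$ (the three elements outside $L$ are involutions, with minimal polynomial $(T+1)^2$).

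First I would fix the involution. Since $n\geq 4$, there is an involution $t=1+N\in G$ with $N^{2}=0$ and $\mathrm{rank}(N)=2$; pick a basis $e_{1},e_{2},h_{1},\dots,h_{n-4},f_{1},f_{2}$ of $V$ with $Ne_{i}=Nh_{j}=0$ and $Nf_{i}=e_{i}$. A routine computation of the same flavour as the one used for $H$ in Proposition~\ref{propH} shows that, with respect to the block decomposition $V=\langle e_{1},e_{2}\rangle\oplus\langle h_{1},\dots,h_{n-4}\rangle\oplus\langle f_{1},f_{2}\rangle$, the centraliser $C=C_{G}(t)$ is the set of block upper-triangular matrices $\bigl(\begin{smallmatrix}A&\ast&\ast\\0&D&\ast\\0&0&A\end{smallmatrix}\bigr)$ with $A\in{\rm GL}(2,2)$ and $D\in{\rm GL}(n-4,2)$ (the middle block being absent when $n=4$); in particular the top-left and bottom-right $2\times2$ blocks of every element of $C$ coincide.

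Next I would choose $x=\diago(I_{2},I_{n-4},b)$, where $b\in{\rm GL}(2,2)$ is an involution (so $b\notin L$). Then $x\notin C$, since the two $2\times2$ corner blocks of $x$ are $I_{2}$ and $b\neq I_{2}$. For $c\in C$ with top-left block $A\in{\rm GL}(2,2)$, the product $cx$ is again block upper-triangular, with top-left block $A$ and bottom-right block $Ab$. If $cx$ had odd order it would be semisimple, and then by fact (ii) its restriction to the invariant subspace $\langle e_{1},e_{2}\rangle$ and its action on the invariant quotient $V/\ker N$, namely $A$ and $Ab$, would both be semisimple, hence both would lie in $L$ by fact (iii). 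But $A\in L$ and $b\notin L$ force $Ab\notin L$, a contradiction. Therefore every element of the coset $Cx$ has even order, which is the assertion.

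I do not expect a genuine obstacle here: the only non-routine step is guessing the right pair $(t,x)$, and once it is written down the verification rests entirely on the three facts above, while the description of $C_{G}(t)$ is the same kind of matrix bookkeeping already carried out in Proposition~\ref{propH}. It is perhaps worth noting that the construction really uses $n\geq4$, since it needs a rank-$2$ nilpotent $N$; for $n=3$ the only nontrivial involution class consists of transvections and this argument is unavailable, in agreement with Conjecture~\ref{conj2}.
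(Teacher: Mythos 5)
Your proof is correct, and it replaces the paper's computational verification by a uniform conceptual argument. The underlying example is essentially the same: the paper also takes the involution $1+N$ with $N$ of rank $2$ and square zero (its $\tilde t_1=\diago(I_{n-4},t_1)$ is conjugate to your $t$) and also multiplies the centralizer by a transvection. But the paper verifies the absence of odd-order elements by writing out the coset $C_4t_2$ in $\SL(4,2)$ explicitly, checking with \textsc{Magma} that its elements have order $2$, $4$ or $6$, and then reducing the case $n>4$ to the bottom-right $4\times 4$ block together with a further machine check for $k=1,3,5$. Your route --- odd order is equivalent to semisimplicity, semisimplicity passes to the invariant subspace ${\rm im}\,N=\langle e_1,e_2\rangle$ and to the invariant quotient $V/\ker N$, and the two induced $2\times 2$ blocks $A$ and $Ab$ cannot both lie in the unique subgroup of order $3$ of ${\rm GL}(2,2)$ because $b$ is an involution --- needs no computation, treats all $n\geq 4$ simultaneously, and makes transparent why a rank-$2$ nilpotent part (hence $n\geq 4$) is required, which the paper's proof does not explain. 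The only step you should write out in full is the identification of $C_G(t)$: one checks directly that a block matrix $M$ commutes with $N$ exactly when its $(2,1)$, $(3,1)$ and $(3,2)$ blocks vanish and its two corner $2\times 2$ blocks coincide, which is precisely the description you give, so there is no gap.
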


\begin{proof}
First, consider in $G_4=\SL(4,2)$ the following two involutions
$$t_1=\left(\begin{array}{cccc} 1 & 1  & 0 &0  \\0 & 1 & 0& 0\\ 0 & 0& 1 & 1\\0 & 0& 0& 1\end{array}
 \right),\qquad
 t_2=\left(\begin{array}{cccc} 1 & 0  & 1 &0  \\0 & 1 & 0& 0\\ 0 & 0& 1 & 0\\0 & 0& 0& 1\end{array}
 \right).$$

Let $C_4$ be the centralizer in $G_4$ of $t_1$. Then,
the coset $C_4t_2$ is the following set:
$$C_4t_2=\Bigg\{\left(\begin{array}{cccc}
a_1 & a_2  & a_1+a_3 &a_4  \\0 & a_1 & 0& a_3 \\ 
a_5 & a_6& a_5+a_7 & a_8\\ 0 & a_5& 0& a_7\end{array}   \right)  \mid a_1a_7 \neq  a_3a_5\Bigg\}.$$

It can be proved, using for instance \textsc{Magma} that these elements have order $2$, $4$ or $6$.

Now, assume $n>4$. In $G_n=\SL(n,2)$, we consider the  following block matrices:
$\tilde t_1=\diago(I_{n-4},t_1)$ and $\tilde t_2=\diago(I_{n-4},t_2)$, where $I_{n-4}$ denotes the identity matrix of size $n-4$. 
Clearly, these two elements are both involutions in $G_n$. Furthermore, denoting by $C_n$ the centralizer of $\tilde t_1$ in $G_n$, 
we see that the coset $C_n\tilde t_2$ consists of block matrices $g$ of shape $\bigl(\begin{smallmatrix} X_g & Y_g \\Z_g & W_g \end{smallmatrix}\bigr)$,
where $X_g\in \SL(n-4,2)$, $W_g\in C_4t_2$ and the matrices $Y_g=(y_{i,j})$ and $Z_g=(z_{i,j})$ are such that $y_{i,j}=0$ for $j=1,3$ and $z_{i,j}=0$ for $i=2,4$.

The particular shape of these matrices implies that if $g\in C_n\tilde t_2$ has order $k$, then the associated block $W_g\in C_4 t_2$
must satisfy the condition $(W_g)^k=\Big(\begin{smallmatrix} 1 & \ast & 0& \ast \\ 0 & 1 & 0& 0 \\
0& \ast &1 &\ast \\ 0 & 0 & 0& 1\end{smallmatrix}\Big)$. 
However this condition is never satisfied for $k$  odd. Since the orders of the elements in $C_4t_2$ are $2$, $4$ or $6$, it suffices to check only the cases $k=1,3,5$. This can be done using \textsc{Magma}. The claim now follows.
 \end{proof}

Observe that the involution $\tilde t_1$ described in the previous proposition does not belong to the center of a Sylow $2$-subgroup of the group.

\section{Proof of Theorem \ref{commu}}

In this section we prove Theorem \ref{commu} using the properties of the strongly real elements. An element $g$ of a group $H$ is called real if it is conjugate to its inverse and  is called strongly real if there exists an involution $t\in H$ such that $tgt=g^{-1}$. Observe that an element is strongly real if and only if it can be written as the product of two involutions.

\begin{lemma}
Any strongly real element of $\PSL(2,q)$ is a coprime commutator.
\end{lemma}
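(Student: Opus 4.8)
The plan is to combine the characterization of strongly real elements recalled just above with Corollary~\ref{invo}. A strongly real element $g\in G=\PSL(2,q)$ is, by definition, a product $g=t_1t_2$ of two involutions, and Corollary~\ref{invo} tells us that the second involution can be obtained from the first by conjugation by an element of odd order. Substituting this into the factorization and using that an involution equals its own inverse should exhibit $g$ directly as a commutator $[t_1,h]$ with $t_1$ of order $2$ and $h$ of odd order, which is precisely a coprime commutator.

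Concretely, I would argue as follows. If $g=1$, then $g=[1,1]$ and we are done, so assume $g\neq 1$. Write $g=t_1t_2$ with $t_1,t_2$ involutions; here necessarily $t_1\neq t_2$, since otherwise $g=t_1^2=1$. By Corollary~\ref{invo} there is $h\in G$ of odd order with $t_2=h^{-1}t_1h$. Then
$$g=t_1t_2=t_1h^{-1}t_1h=t_1^{-1}h^{-1}t_1h=[t_1,h],$$
the third equality holding because $t_1=t_1^{-1}$. As $|t_1|=2$ and $|h|$ is odd, the orders of $t_1$ and $h$ are coprime, so $g$ is a coprime commutator.

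There is essentially no hard step here: Corollary~\ref{invo} --- which itself rests on Theorem~\ref{cose}, and hence on Proposition~\ref{propH} --- already carries the entire difficulty. The only points requiring attention are the degenerate cases (the identity, and a factorization with $t_1=t_2$, both disposed of above) and the small observation that the rewriting $t_1h^{-1}t_1h=[t_1,h]$ works precisely because $t_1$ is an involution. In fact the same reasoning is valid in any finite group in which any two involutions are conjugate by an element of odd order, which is exactly why the coset statement of Theorem~\ref{cose} is the real content behind this lemma.
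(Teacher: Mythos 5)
Your proof is correct and follows essentially the same route as the paper: write $g=t_1t_2$ with $t_1,t_2$ involutions, apply Corollary~\ref{invo} to get $t_2=h^{-1}t_1h$ with $h$ of odd order, and rewrite $g=[t_1,h]$. The only difference is that you explicitly dispose of the degenerate cases $g=1$ and $t_1=t_2$, which the paper passes over silently.
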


\begin{proof}
Let $g\in G=\PSL(2,q)$ be a strongly real element. Then, there exist two distinct involutions $t_1$ and $t_2$ in $G$ such that $g=t_1t_2$. 
By Corollary \ref{invo} there exist an element $x\in G$ of odd order such that $t_2=x^{-1} t_1 x$. Hence,
$$g=t_1t_2=t_1x^{-1} t_1 x=[t_1,x],$$
and so $g$ is a coprime commutator.
\end{proof}

Note that, actually, we proved that any strongly real element $g$ in $G=\PSL(2,q)$ can be written as $g=[a,b]$, for some involution $a$ and some element $b$ of odd order.
\medskip

Assume $q>3$.
If $q\not \equiv 3 \pmod 4$, then every element of $G$ is strongly real, see \cite{TZ} and \cite{Gill}. So, by the previous Lemma, it is a coprime commutator.

If $q=p^f \equiv 3 \pmod 4$, all real elements are actually strongly real \cite{Gill}. So, it suffices to study the non-real elements. By \cite[Theorem 38.1]{dorn}, only the two classes of unipotent elements are not real.
Furthermore, these elements have order $p$.

Let $P$ be a Sylow $p$-subgroup in $G$. Then $P$ is elementary abelian of order $q$ and $B=N_G(P)$, the Borel subgroup of $G$, is a Frobenius group with a cyclic complement of order $\frac{q-1}{2}$, which acts irreducibly on $P$. Hence,  every element of $P$ is of shape $[g,a]$, for some $g\in P$ and an element $a$ in the complement. The proof is now complete.

\end{document}